\documentclass[9pt,twoside,reqno]{amsart}
\usepackage{amssymb}
\usepackage{amsmath}
\usepackage{amsfonts}

\setcounter{MaxMatrixCols}{10}

\newtheorem{theorem}{Theorem}
\theoremstyle{plain}

\newtheorem{corollary}{Corollary}
\newtheorem{definition}{Definition}

\newtheorem{proposition}{Proposition}
\newtheorem{remark}{Remark}

\numberwithin{equation}{section}
\input{tcilatex}

\begin{document}
\title[Inequalities Related to $h-$logaritmic, $h-$geometric and $h-$multi
Convex Functions]{DEFINITIONS OF $h-$LOGARITMIC, $h-$GEOMETRIC AND $h-$MULTI
CONVEX FUNCTIONS AND SOME INEQUALITIES RELATED TO THEM}
\author{M.EM\.{I}N \"{O}ZDEM\.{I}R$^{\bigstar }$}
\address{$^{\bigstar }$ATAT\"{U}RK UNIVERSITY, K. K. EDUCATION FACULTY,
DEPARTMENT OF MATHEMATICS, 25240, CAMPUS, ERZURUM, TURKEY}
\email{emos@atauni.edu.tr}
\author{MEVL\"{U}T TUN\c{C}$^{\blacksquare }$}
\address{$^{\blacksquare }$UNIVERSITY OF K\.{I}L\.{I}S 7 ARALIK, FACULTY OF
ARTS AND SCIENCES, DEPARTMENT OF MATHEMATICS, 79000, K\.{I}L\.{I}S, TURKEY}
\email{mevluttunc@kilis.edu.tr}
\author{MUSTAFA G\"{U}RB\"{U}Z$^{\blacktriangle }$}
\address{$^{\blacktriangle }$ UNIVERSITY OF A\u{G}RI \.{I}BRAH\.{I}M \c{C}E%
\c{C}EN, EDUCATION FACULTY, DEPARTMENT OF MATHEMATICS, 04100, A\u{G}RI,
TURKEY}
\email{mgurbuz@agri.edu.tr}
\thanks{$^{\blacktriangle }$Corresponding Author.}
\keywords{Hermite-Hadamard inequality, $h$-logaritmically convex, $h$%
-geometrically convex, $h$-multi convex, superadditivity}

\begin{abstract}
In this paper, we put forward some new definitions and integral inequalities
by using fairly elementary analysis.
\end{abstract}

\maketitle

\section{\textbf{Introduction}}

The following inequality is well known in the literature as the
Hermite-Hadamard integral inequality \cite{hdm}.

\begin{equation}
f\left( \frac{a+b}{2}\right) \leq \frac{1}{b-a}\int_{a}^{b}f\left( x\right)
dx\leq \frac{f\left( a\right) +f\left( b\right) }{2}  \label{101}
\end{equation}%
where $f:I\subset 
\mathbb{R}
\rightarrow 
\mathbb{R}
$ is a convex function on the interval $I$ of real numbers and $a,b\in I$
with $a<b.$

The following definitions is well known in the literature.

\begin{definition}
\bigskip A function $f:I\rightarrow 
\mathbb{R}
,$ $\emptyset \neq I\subseteq 
\mathbb{R}
,$ where $I$ is a convex set, is said to be convex on $I$ if inequality%
\begin{equation}
f\left( tx+\left( 1-t\right) y\right) \leq tf\left( x\right) +\left(
1-t\right) f\left( y\right)
\end{equation}%
holds for all $x,y\in I$ and $t\in \left[ 0,1\right] $.
\end{definition}

The concept of $h$-convexity was introduced by Varo\v{s}anec \cite{var} and
was generalized by H\'{a}zy \cite{H11}.

\begin{definition}
\bigskip \cite{var} \textit{Let }$h:J\rightarrow 
\mathbb{R}
$\textit{\ be a non-negative function, }$h\not\equiv 0$\textit{\ . We say
that }$f:I\rightarrow 
\mathbb{R}
$\textit{\ is an }$h$-\textit{convex function, or that }$f$\textit{\ belongs
to the class }$SX\left( h,I\right) $\textit{, if }$f$\textit{\ is
non-negative and for all }$x,y\in I$\textit{\ and }$t\in \left( 0,1\right) $%
\textit{\ we have \ \ \ \ \ \ \ \ \ \ \ \ }%
\begin{equation}
f\left( tx+\left( 1-t\right) y\right) \leq h\left( t\right) f\left( x\right)
+h\left( 1-t\right) f\left( y\right) .  \label{106}
\end{equation}
\end{definition}

\bigskip If inequality (\ref{106}) is reversed, then $f$ is said to be $h$%
-concave, i.e. $f\in SV\left( h,I\right) $. Obviously, if $h\left( t\right)
=t$, then all nonnegative convex functions belong to $SX\left( h,I\right) $\
and all nonnegative concave functions belong to $SV\left( h,I\right) $; if $%
h\left( t\right) =\frac{1}{t}$, then $SX\left( h,I\right) =Q\left( I\right) $%
; if $h\left( t\right) =1$, then $SX\left( h,I\right) \supseteq P\left(
I\right) $; and if $h\left( t\right) =t^{s}$, where $s\in \left( 0,1\right) $%
, then $SX\left( h,I\right) \supseteq K_{s}^{2}$.

\begin{definition}
\cite{alzer} A function $h:J\rightarrow 
\mathbb{R}
$ is said to be a superadditive function if%
\begin{equation}
h\left( x+y\right) \geq h\left( x\right) +h\left( y\right)  \label{108}
\end{equation}%
for all $x,y\in J.$
\end{definition}

Recently, In \cite{zh}, the concept of geometrically and $s$-geometrically
convex functions was introduced as follows.

\begin{definition}
\cite{zh} A function $f:I\subset 
\mathbb{R}
_{+}\rightarrow 
\mathbb{R}
_{+}$\textit{\ }is said to be a geometrically convex function if%
\begin{equation}
f\left( x^{t}y^{1-t}\right) \leq \left[ f\left( x\right) \right] ^{t}\left[
f\left( y\right) \right] ^{1-t}  \label{d3}
\end{equation}%
for \textit{all }$x,y\in I$\textit{\ and }$t\in \left[ 0,1\right] $\textit{.}
\end{definition}

\begin{definition}
\cite{zh} A function $f:I\subset 
\mathbb{R}
_{+}\rightarrow 
\mathbb{R}
_{+}$\textit{\ }is said to be an $s$-geometrically convex function if%
\begin{equation}
f\left( x^{t}y^{1-t}\right) \leq \left[ f\left( x\right) \right] ^{t^{s}}%
\left[ f\left( y\right) \right] ^{\left( 1-t\right) ^{s}}  \label{d4}
\end{equation}%
for some $s\in \left( 0,1\right] $, $x,y\in I$\textit{\ and }$t\in \left[ 0,1%
\right] $\textit{.}
\end{definition}

If $s=1$, the $s$-geometrically convex function becomes a geometrically
convex function on $%
\mathbb{R}
_{+}$.

\bigskip In \cite{at}, Tun\c{c} and Akdemir introduced the class of $s$%
-logarithmically convex functions in the first and second sense as the
following:

\begin{definition}
\label{mm}\textit{A function }$f:I\subset 
\mathbb{R}
_{0}\rightarrow 
\mathbb{R}
_{+}$\textit{\ is said to be }$s$-logarithmically convex in the first sense%
\textit{\ if \ \ \ \ \ \ \ \ \ \ \ \ }%
\begin{equation}
f\left( \alpha x+\beta y\right) \leq \left[ f\left( x\right) \right]
^{\alpha ^{s}}\left[ f\left( y\right) \right] ^{\beta ^{s}}  \label{m1}
\end{equation}%
for some $s\in \left( 0,1\right] $, where $x,y\in I$\textit{\ and }$\alpha
^{s}+\beta ^{s}=1.$
\end{definition}

\begin{definition}
\label{mmm}\textit{A function }$f:I\subset 
\mathbb{R}
_{0}\rightarrow 
\mathbb{R}
_{+}$\textit{\ is said to be }$s$-logarithmically convex in the second sense%
\textit{\ if \ \ \ \ \ \ \ \ \ \ \ \ }%
\begin{equation}
f\left( tx+\left( 1-t\right) y\right) \leq \left[ f\left( x\right) \right]
^{t^{s}}\left[ f\left( y\right) \right] ^{\left( 1-t\right) ^{s}}  \label{m2}
\end{equation}%
for some $s\in \left( 0,1\right] $, where $x,y\in I$\textit{\ and }$t\in %
\left[ 0,1\right] $\textit{.}
\end{definition}

\bigskip Clearly, when taking $s=1$ in Definition \ref{mm} or Definition \ref%
{mmm}, then $f$ becomes the standard logarithmically convex function on $I$.

\section{\textbf{Results for }$h$-$\log $-\textbf{Convex Functions}}

\begin{definition}
\label{d1}A positive function $f$ is called $h$-logarithmically convex on a
real interval $I=\left[ a,b\right] ,$ if for all $x,y\in I$ and $t\in \left[
0,1\right] ,$%
\begin{equation}
f\left( tx+\left( 1-t\right) y\right) \leq \left[ f\left( x\right) \right]
^{h\left( t\right) }\left[ f\left( y\right) \right] ^{h\left( 1-t\right) }
\label{d11}
\end{equation}%
where $h\left( t\right) $\ is a nonnegative function on $J,$ with $%
h:J\subseteq 
\mathbb{R}
\rightarrow 
\mathbb{R}
.$
\end{definition}

If $f$ is a positive $h$-logarithmically concave function, then inequality
is reversed. On the other hand, a function $f$ is $h$-logarithmically convex
on $I$ if $f$ is positive and $\log f$ is $h$-convex on $I$.

\begin{proof}
Let's rewrite $g=\log f\left( x\right) .$ Since $g$ is $h-$convex function,
for $x,y\in I$ and $t\in \left[ 0,1\right] $ we get%
\begin{eqnarray*}
g\left( tx+\left( 1-t\right) y\right) &\leq &h\left( t\right) g\left(
x\right) +h\left( 1-t\right) g\left( y\right) \\
\log f\left( tx+\left( 1-t\right) y\right) &\leq &h\left( t\right) \log
f\left( x\right) +h\left( 1-t\right) \log f\left( y\right) \\
&=&\log f\left( x\right) ^{h\left( t\right) }+\log f\left( y\right)
^{h\left( 1-t\right) }
\end{eqnarray*}%
So we have%
\begin{eqnarray*}
f\left( tx+\left( 1-t\right) y\right) &\leq &e^{\log f\left( x\right)
^{h\left( t\right) }}.e^{\log f\left( y\right) ^{h\left( 1-t\right) }} \\
&=&\left[ f\left( x\right) \right] ^{h\left( t\right) }\left[ f\left(
y\right) \right] ^{h\left( 1-t\right) }.
\end{eqnarray*}
\end{proof}

\begin{remark}
If we take $h\left( t\right) =t$ in Definition \ref{d1}, $h$-logarithmically
convex (concave) become ordinary $\log $-convex (concave) function, and if
we take $h\left( t\right) =t^{s}$ in Definition \ref{d1}, $h$%
-logarithmically convex (concave) become $s$-$\log $-convex (concave)
function in the second sense.
\end{remark}

\begin{proposition}
Let $f$ be an $h$-$\log $-convex function. If the function $h$ satisfies the
condition%
\begin{equation*}
h\left( t\right) +h\left( 1-t\right) =1
\end{equation*}%
for all $t\in \left[ 0,1\right] ,$ then $f$ is also $h$-convex function.
\end{proposition}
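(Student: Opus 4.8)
The plan is to chain the defining inequality of $h$-log-convexity with the weighted arithmetic--geometric mean inequality. Starting from Definition \ref{d1}, for all $x,y\in I$ and $t\in\left[ 0,1\right] $ we have
\begin{equation*}
f\left( tx+\left( 1-t\right) y\right) \leq \left[ f\left( x\right) \right] ^{h\left( t\right) }\left[ f\left( y\right) \right] ^{h\left( 1-t\right) }.
\end{equation*}
The target is the bound $f\left( tx+\left( 1-t\right) y\right) \leq h\left( t\right) f\left( x\right) +h\left( 1-t\right) f\left( y\right) $, which is exactly the defining inequality \eqref{106} for membership in $SX\left( h,I\right) $. So it suffices to dominate the product on the right-hand side above by the corresponding weighted sum.

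First I would set $\lambda =h\left( t\right) $ and $\mu =h\left( 1-t\right) $ and record that these form a legitimate pair of weights: they are nonnegative because $h$ is a nonnegative function (as required in Definition \ref{d1}), and they satisfy $\lambda +\mu =1$ precisely by the hypothesis $h\left( t\right) +h\left( 1-t\right) =1$. Since $f$ is positive, the numbers $f\left( x\right) $ and $f\left( y\right) $ are positive, so the two-term weighted AM--GM inequality applies and gives
\begin{equation*}
\left[ f\left( x\right) \right] ^{h\left( t\right) }\left[ f\left( y\right) \right] ^{h\left( 1-t\right) }\leq h\left( t\right) f\left( x\right) +h\left( 1-t\right) f\left( y\right) .
\end{equation*}
Combining this with the previous display yields $f\left( tx+\left( 1-t\right) y\right) \leq h\left( t\right) f\left( x\right) +h\left( 1-t\right) f\left( y\right) $ for all $x,y\in I$ and $t\in \left[ 0,1\right] $, which is the asserted $h$-convexity.

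I do not anticipate a serious obstacle here: the entire argument turns on recognizing that the normalization $h\left( t\right) +h\left( 1-t\right) =1$ is exactly what is needed to invoke weighted AM--GM. The only points demanding care are the two hypotheses that make the geometric mean meaningful and the bound valid, namely the positivity of $f$ and the nonnegativity of $h$; both are already built into Definition \ref{d1}, so they may be used without further justification.
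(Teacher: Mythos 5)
Your proof is correct and follows essentially the same route as the paper's: chain the $h$-log-convexity bound with the weighted AM--GM inequality $x^{\alpha}y^{1-\alpha}\leq \alpha x+\left( 1-\alpha \right) y$, using $h\left( t\right) +h\left( 1-t\right) =1$ to make $h\left( t\right) ,h\left( 1-t\right) $ legitimate weights. Your version is in fact slightly more careful than the paper's, which invokes the same inequality without explicitly checking that the exponents are nonnegative and lie in $\left[ 0,1\right]$.
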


\begin{proof}
As we choose $f$ is $h$-$\log $-convex function we can write%
\begin{equation*}
f\left( tx+\left( 1-t\right) y\right) \leq \left[ f\left( x\right) \right]
^{h\left( t\right) }\left[ f\left( y\right) \right] ^{h\left( 1-t\right) }.
\end{equation*}%
From a simple inequality%
\begin{equation*}
x^{\alpha }y^{1-\alpha }\leq \alpha x+\left( 1-\alpha \right) y
\end{equation*}%
for $x,y>0$ and by using the condition $h\left( t\right) +h\left( 1-t\right)
=1$ we have%
\begin{eqnarray*}
f\left( tx+\left( 1-t\right) y\right) &\leq &\left[ f\left( x\right) \right]
^{h\left( t\right) }\left[ f\left( y\right) \right] ^{h\left( 1-t\right) } \\
&\leq &h\left( t\right) f\left( x\right) +h\left( 1-t\right) f\left( y\right)
\end{eqnarray*}%
which shows that $f$ is $h$-convex function.
\end{proof}

\begin{theorem}
\label{10}Let $f$ be an $h$-$\log $-convex function. If $f$ is monotonically
increasing or decreasing and $h$ is superadditive function on $\left[ 0,1%
\right] $, we have%
\begin{equation}
\frac{1}{b-a}\int_{a}^{b}f\left( x\right) f\left( a+b-x\right) dx\leq \left[
f\left( a\right) f\left( b\right) \right] ^{h\left( 1\right) }  \label{a}
\end{equation}%
and%
\begin{equation}
\frac{1}{\left( b-a\right) ^{2}}\left( \int_{a}^{b}f\left( x\right)
dx\right) ^{2}\leq \left[ f\left( a\right) f\left( b\right) \right]
^{h\left( 1\right) }.  \label{b}
\end{equation}
\end{theorem}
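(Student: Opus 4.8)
The plan is to treat both inequalities through a single pointwise estimate that is then integrated. First I would parametrize the interval by writing $x = ta + (1-t)b$ with $t = \frac{b-x}{b-a} \in [0,1]$; a short computation gives the companion point $a + b - x = (1-t)a + tb$. Applying the defining inequality of an $h$-$\log$-convex function (Definition \ref{d1}) to each of these two points yields $f(x) \le [f(a)]^{h(t)}[f(b)]^{h(1-t)}$ and $f(a+b-x) \le [f(a)]^{h(1-t)}[f(b)]^{h(t)}$.

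Multiplying these two estimates collapses the exponents symmetrically, giving $f(x)f(a+b-x) \le [f(a)f(b)]^{h(t)+h(1-t)}$. At this point the superadditivity of $h$ enters: since $h(t) + h(1-t) \le h\big(t + (1-t)\big) = h(1)$, I can replace the exponent by $h(1)$, arriving at the constant pointwise bound $f(x)f(a+b-x) \le [f(a)f(b)]^{h(1)}$. Integrating this over $[a,b]$ and dividing by $b-a$ produces inequality \eqref{a} directly, because the bounding quantity no longer depends on $x$. The one point that must be checked carefully here is the passage from exponent $h(t)+h(1-t)$ to exponent $h(1)$: enlarging an exponent enlarges the power only when the base $f(a)f(b)$ is at least $1$, and I expect this to be the place where the hypotheses on $f$ (positivity together with the monotonicity assumption) have to be invoked to guarantee the correct orientation of the final estimate.

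For inequality \eqref{b} the strategy is to reduce it to the estimate just obtained. The natural device is to observe that the substitution $u = a+b-x$ leaves the integral invariant, so $\int_a^b f(x)\,dx = \int_a^b f(a+b-x)\,dx$, and then to relate the square $\left(\int_a^b f\right)^2$ to the cross integral $\int_a^b f(x)f(a+b-x)\,dx$ appearing in \eqref{a} through a correlation inequality of Chebyshev/Cauchy--Schwarz type. This is precisely where the monotonicity hypothesis on $f$ is meant to act, since $f(x)$ and $f(a+b-x)$ are oppositely ordered when $f$ is monotone. I expect this reduction to be the main obstacle: to close the argument by chaining with \eqref{a} one needs a correlation bound in the direction $\left(\int_a^b f\right)^2 \le (b-a)\int_a^b f(x)f(a+b-x)\,dx$, and care is required because the classical Chebyshev inequality for oppositely monotone factors a priori controls this pair in the opposite sense. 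Once a correlation estimate of the required orientation is secured, combining it with \eqref{a} yields \eqref{b} at once.
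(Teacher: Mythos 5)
Your treatment of inequality (\ref{a}) is essentially the paper's own argument: apply Definition \ref{d1} at $t$ and $1-t$, multiply, use superadditivity to get $h(t)+h(1-t)\leq h(1)$, and integrate. The caveat you flag there is genuine and is \emph{not} handled in the paper: passing from the exponent $h(t)+h(1-t)$ to the larger exponent $h(1)$ preserves the inequality only if the base satisfies $f(a)f(b)\geq 1$, and neither positivity nor monotonicity of $f$ guarantees that. Your guess that the monotonicity hypothesis is meant to repair this step is, however, not what the paper does; the paper performs the exponent step with no justification at all and reserves monotonicity for the proof of (\ref{b}).

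For (\ref{b}), the obstacle you identified is exactly where the paper's proof fails, and your refusal to close the reduction is the correct judgment. The paper asserts, ``using Chebyshev inequality on (\ref{a}),'' that $\frac{1}{b-a}\int_{a}^{b}f(x)f(a+b-x)\,dx\geq\frac{1}{(b-a)^{2}}\int_{a}^{b}f(x)\,dx\int_{a}^{b}f(a+b-x)\,dx$. But Chebyshev's integral inequality gives ``$\geq$'' only for similarly ordered factors, whereas for monotone $f$ the functions $x\mapsto f(x)$ and $x\mapsto f(a+b-x)$ are \emph{oppositely} ordered, so Chebyshev yields the reverse inequality --- precisely your observation. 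The needed correlation bound $\left(\int_{a}^{b}f\right)^{2}\leq(b-a)\int_{a}^{b}f(x)f(a+b-x)\,dx$ cannot be secured by any device, because inequality (\ref{b}) is in fact false as stated: take $f(x)=e^{x}$ on $[0,1]$ and $h(t)=t$, so that $f$ is $h$-$\log$-convex and increasing and $h$ is superadditive with $h(1)=1$; then $\frac{1}{(b-a)^{2}}\left(\int_{0}^{1}e^{x}\,dx\right)^{2}=(e-1)^{2}\approx 2.95$, while $\left[f(0)f(1)\right]^{h(1)}=e\approx 2.72$. (The same example gives equality in (\ref{a}), which remains consistent.) So your proposal reproduces the paper's proof of (\ref{a}) while correctly flagging its hidden hypothesis, and it correctly diagnoses that the paper's proof of (\ref{b}) rests on an invalid application of Chebyshev's inequality to an oppositely ordered pair.
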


\begin{proof}
Since $f$ is an $h$-$\log $-convex function, for $a,b\in I,$ $t\in \left[ 0,1%
\right] $ we have%
\begin{equation*}
f\left( ta+\left( 1-t\right) b\right) \leq \left[ f\left( a\right) \right]
^{h\left( t\right) }\left[ f\left( b\right) \right] ^{h\left( 1-t\right) }
\end{equation*}%
and%
\begin{equation*}
f\left( tb+\left( 1-t\right) a\right) \leq \left[ f\left( b\right) \right]
^{h\left( t\right) }\left[ f\left( a\right) \right] ^{h\left( 1-t\right) }.
\end{equation*}%
If we multiply both sides we have%
\begin{eqnarray*}
f\left( ta+\left( 1-t\right) b\right) f\left( tb+\left( 1-t\right) a\right)
&\leq &\left[ f\left( a\right) f\left( b\right) \right] ^{h\left( t\right) }%
\left[ f\left( a\right) f\left( b\right) \right] ^{h\left( 1-t\right) } \\
&=&\left[ f\left( a\right) f\left( b\right) \right] ^{h\left( t\right)
+h\left( 1-t\right) }.
\end{eqnarray*}%
As we choose $h$ is superadditive function we get%
\begin{equation*}
f\left( ta+\left( 1-t\right) b\right) f\left( tb+\left( 1-t\right) a\right)
\leq \left[ f\left( a\right) f\left( b\right) \right] ^{h\left( 1\right) }.
\end{equation*}%
By integrating the last inequality over $t$ from $0$ to $1$ we get%
\begin{equation*}
\frac{1}{b-a}\int_{a}^{b}f\left( x\right) f\left( a+b-x\right) dx\leq \left[
f\left( a\right) f\left( b\right) \right] ^{h\left( 1\right) }.
\end{equation*}%
So the proof of (\ref{a}) is completed.

On the other hand is we use Chebyshev inequality on (\ref{a}) we have%
\begin{eqnarray*}
\frac{1}{b-a}\int_{a}^{b}f\left( x\right) f\left( a+b-x\right) dx &\geq &%
\frac{1}{\left( b-a\right) ^{2}}\int_{a}^{b}f\left( x\right)
dx\int_{a}^{b}f\left( a+b-x\right) dx \\
&=&\frac{1}{\left( b-a\right) ^{2}}\left( \int_{a}^{b}f\left( x\right)
dx\right) ^{2}.
\end{eqnarray*}%
So we have%
\begin{equation*}
\frac{1}{\left( b-a\right) ^{2}}\left( \int_{a}^{b}f\left( x\right)
dx\right) ^{2}\leq \left[ f\left( a\right) f\left( b\right) \right]
^{h\left( 1\right) }.
\end{equation*}%
Then proof of (\ref{b}) is completed.
\end{proof}

\begin{corollary}
If we choose $h\left( t\right) =\frac{1}{t}$ at Theorem \ref{10} as a
superadditive function on $\left[ 0,1\right] $\ we have%
\begin{equation*}
\frac{1}{b-a}\int_{a}^{b}f\left( x\right) f\left( a+b-x\right) dx\leq
f\left( a\right) f\left( b\right)
\end{equation*}%
and%
\begin{equation*}
\frac{1}{\left( b-a\right) ^{2}}\left( \int_{a}^{b}f\left( x\right)
dx\right) ^{2}\leq f\left( a\right) f\left( b\right) .
\end{equation*}
\end{corollary}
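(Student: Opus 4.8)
The plan is to obtain this corollary as an immediate specialization of Theorem \ref{10}, so the only genuine work is to evaluate the auxiliary function $h$ at the endpoint $t=1$. Everything else reduces to substitution.

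First I would confirm that $h(t)=\tfrac{1}{t}$ is admissible as the weight in Theorem \ref{10}; following the statement of the corollary, I take $h$ to be superadditive on $[0,1]$, which is precisely the hypothesis that theorem requires. Granting this, Theorem \ref{10} directly furnishes inequalities (\ref{a}) and (\ref{b}), namely that both integral expressions are bounded above by $\left[ f(a)f(b)\right] ^{h(1)}$. Next I would compute $h(1)=\tfrac{1}{1}=1$, so that the exponent collapses and $\left[ f(a)f(b)\right] ^{h(1)}=\left[ f(a)f(b)\right] ^{1}=f(a)f(b)$. Substituting this value into (\ref{a}) and (\ref{b}) yields exactly the two asserted bounds, and the argument is complete.

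I expect the only delicate point to be the verification that $h(t)=\tfrac{1}{t}$ actually satisfies the superadditivity condition $h(x+y)\geq h(x)+h(y)$ demanded by Theorem \ref{10}, since that inequality is far from automatic for the reciprocal function; this is the one step where care is warranted. The remainder is the trivial evaluation $h(1)=1$ together with a direct substitution, so once admissibility is granted the conclusion follows at once.
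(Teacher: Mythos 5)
Your substitution mechanics coincide with what the paper does: the paper prints no proof for this corollary at all, treating it exactly as you do, as the specialization of Theorem \ref{10} in which $h(1)=\frac{1}{1}=1$ collapses the exponent in (\ref{a}) and (\ref{b}). However, the one step you explicitly flag and then wave through --- the superadditivity of $h(t)=\frac{1}{t}$ --- is precisely where the argument fails, and ``granting it following the statement of the corollary'' is not a verification. The function $h(t)=\frac{1}{t}$ is \emph{not} superadditive on $\left[0,1\right]$: for all $x,y>0$ one has $\frac{1}{x+y}<\frac{1}{x}+\frac{1}{y}$, so $h$ is strictly \emph{sub}additive; concretely, with $x=y=\frac{1}{2}$ one gets $h(x+y)=h(1)=1$ while $h(x)+h(y)=4$. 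So the hypothesis of Theorem \ref{10} is simply violated and the corollary does not follow from it. This defect is inherited from the paper itself, which asserts the superadditivity without checking it, but a blind proof should have caught it --- you yourself observed that the condition is ``far from automatic'' for the reciprocal function, which was exactly the cue to test it.

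The failure is substantive, not cosmetic. In the proof of Theorem \ref{10}, superadditivity is used to pass from the exponent $h(t)+h(1-t)$ to $h(1)$; for $h(t)=\frac{1}{t}$ this comparison reverses, since
\begin{equation*}
h(t)+h(1-t)=\frac{1}{t}+\frac{1}{1-t}=\frac{1}{t\left(1-t\right)}\geq 4>1=h(1)
\end{equation*}
for $t\in\left(0,1\right)$, so the chain of estimates runs the wrong way unless one additionally assumes $f(a)f(b)\leq 1$, in which case raising a base at most $1$ to a \emph{larger} exponent decreases the value. (Note also that the theorem's own proof tacitly needs $f(a)f(b)\geq 1$ to convert $h(t)+h(1-t)\leq h(1)$ into an inequality between the corresponding powers; neither base-size hypothesis appears anywhere.) To repair your write-up you would have to either replace $h(t)=\frac{1}{t}$ by a weight that is genuinely superadditive on $\left[0,1\right]$ with $h(1)=1$, or add an explicit hypothesis such as $f(a)f(b)\leq 1$ and carry out the exponent comparison directly rather than citing Theorem \ref{10}.
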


\begin{theorem}
Let $f$ and $g$ are $h$-$\log $-convex functions on $I$ and let $h$ is
symmetric about $\frac{1}{2}.$ For $a,b\in I$ and $t\in \left[ 0,1\right] $
we have%
\begin{equation}
\frac{1}{b-a}\int_{a}^{b}\left( fg\right) \left( x\right) dx\leq \int_{0}^{1}%
\left[ \left( fg\right) \left( a\right) \left( fg\right) \left( b\right) %
\right] ^{h\left( t\right) }dt.  \label{c}
\end{equation}
\end{theorem}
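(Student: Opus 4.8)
The plan is to exploit the definition of $h$-$\log$-convexity for each factor separately, multiply the two resulting bounds, collapse the exponents using the symmetry of $h$, and then integrate in $t$ with the same affine change of variables used in the proof of Theorem \ref{10}.

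First I would apply Definition \ref{d1} to $f$ and to $g$ at the point $ta+(1-t)b$, obtaining
\begin{equation*}
f(ta+(1-t)b)\leq \left[ f(a)\right] ^{h(t)}\left[ f(b)\right] ^{h(1-t)},\qquad g(ta+(1-t)b)\leq \left[ g(a)\right] ^{h(t)}\left[ g(b)\right] ^{h(1-t)}.
\end{equation*}
Because all quantities are positive, I may multiply these two inequalities and regroup the bases to get
\begin{equation*}
(fg)(ta+(1-t)b)\leq \left[ (fg)(a)\right] ^{h(t)}\left[ (fg)(b)\right] ^{h(1-t)}.
\end{equation*}

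At this step I would invoke the symmetry hypothesis, reading ``$h$ is symmetric about $\tfrac{1}{2}$'' as $h(t)=h(1-t)$. This makes the two exponents equal, so the product of the two bases can be raised to the single common power $h(t)$:
\begin{equation*}
(fg)(ta+(1-t)b)\leq \left[ (fg)(a)\,(fg)(b)\right] ^{h(t)}.
\end{equation*}

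Finally, I would integrate both sides over $t\in \left[ 0,1\right] $ and perform the substitution $x=ta+(1-t)b$ on the left, so that $dt=\frac{dx}{a-b}$ and the limits $t=0,1$ correspond to $x=b,a$; this converts the left-hand integral into $\frac{1}{b-a}\int_{a}^{b}(fg)(x)\,dx$, while the right-hand side remains $\int_{0}^{1}\left[ (fg)(a)(fg)(b)\right] ^{h(t)}dt$, giving exactly (\ref{c}). The only point needing care is the symmetry step: one must use $h(t)=h(1-t)$ precisely so the exponents coincide and the bases can be combined into one power; everything else reduces to the routine change of variables already employed in Theorem \ref{10}, so I anticipate no serious obstacle.
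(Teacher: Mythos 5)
Your proposal is correct and follows essentially the same route as the paper's own proof: apply Definition \ref{d1} to $f$ and $g$ at $ta+(1-t)b$, multiply, use $h(t)=h(1-t)$ to merge the exponents, and integrate with the substitution $x=ta+(1-t)b$. The only difference is that you invoke the symmetry of $h$ pointwise before integrating while the paper integrates first and applies symmetry inside the integral, which is immaterial.
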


\begin{proof}
As we choose $f$ and $g$ are $h$-$\log $-convex functions on $I$ we have%
\begin{eqnarray*}
f\left( ta+\left( 1-t\right) b\right) &\leq &\left[ f\left( a\right) \right]
^{h\left( t\right) }\left[ f\left( b\right) \right] ^{h\left( 1-t\right) } \\
g\left( ta+\left( 1-t\right) b\right) &\leq &\left[ g\left( a\right) \right]
^{h\left( t\right) }\left[ g\left( b\right) \right] ^{h\left( 1-t\right) }.
\end{eqnarray*}%
If we multiply both sides we get%
\begin{equation*}
f\left( ta+\left( 1-t\right) b\right) g\left( ta+\left( 1-t\right) b\right)
\leq \left[ f\left( a\right) g\left( a\right) \right] ^{h\left( t\right) }%
\left[ f\left( b\right) g\left( b\right) \right] ^{h\left( 1-t\right) }.
\end{equation*}%
By integrating the inequality from $0$ to $1$ over $t,$ and change the
variable $x=ta+\left( 1-t\right) b$ we have%
\begin{equation*}
\frac{1}{b-a}\int_{a}^{b}\left( fg\right) \left( x\right) dx\leq \int_{0}^{1}%
\left[ f\left( a\right) g\left( a\right) \right] ^{h\left( t\right) }\left[
f\left( b\right) g\left( b\right) \right] ^{h\left( 1-t\right) }dt.
\end{equation*}%
Since $h$ is symmetric about $\frac{1}{2}$ we have $h\left( t\right)
=h\left( 1-t\right) .$ So we have%
\begin{equation*}
\frac{1}{b-a}\int_{a}^{b}\left( fg\right) \left( x\right) dx\leq \int_{0}^{1}%
\left[ \left( fg\right) \left( a\right) \left( fg\right) \left( b\right) %
\right] ^{h\left( t\right) }dt.
\end{equation*}
\end{proof}

\begin{theorem}
Let $f$ and $g$ are $h$-$\log $-convex functions. For $\alpha ,\beta >0$ and 
$\alpha +\beta =1$ we have%
\begin{equation}
\frac{1}{b-a}\int_{a}^{b}\left( fg\right) \left( x\right) dx\leq \int_{0}^{1}%
\left[ \alpha \left\{ \left[ f\left( a\right) \right] ^{h\left( t\right) }%
\left[ f\left( b\right) \right] ^{h\left( 1-t\right) }\right\} ^{\frac{1}{%
\alpha }}+\beta \left\{ \left[ g\left( a\right) \right] ^{h\left( t\right) }%
\left[ g\left( b\right) \right] ^{h\left( 1-t\right) }\right\} ^{\frac{1}{%
\beta }}\right] dt  \label{d}
\end{equation}%
and%
\begin{equation}
\frac{1}{b-a}\int_{a}^{b}\left( fg\right) \left( x\right) dx\leq
\int_{0}^{1}\left\{ \alpha \left[ f\left( a\right) g\left( a\right) \right]
^{\frac{h\left( t\right) }{\alpha }}+\beta \left[ f\left( b\right) g\left(
b\right) \right] ^{\frac{h\left( 1-t\right) }{\beta }}\right\} dt.  \label{e}
\end{equation}
\end{theorem}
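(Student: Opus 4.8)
The plan is to begin from the defining inequalities of $h$-$\log$-convexity for $f$ and $g$ at the point $ta+(1-t)b$, multiply them together, and then split the resulting product by a weighted arithmetic--geometric mean inequality in two \emph{different} groupings to reach \eqref{d} and \eqref{e} respectively. The essential analytic tool will be the elementary Young-type inequality
\[
uv\leq \alpha u^{1/\alpha}+\beta v^{1/\beta},\qquad u,v\geq 0,
\]
valid whenever $\alpha,\beta>0$ and $\alpha+\beta=1$; this follows from weighted AM--GM applied to $A=u^{1/\alpha}$ and $B=v^{1/\beta}$, since then $A^{\alpha}B^{\beta}=uv\leq \alpha A+\beta B$. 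First I would record, for $a,b\in I$ and $t\in[0,1]$, the two bounds $f(ta+(1-t)b)\leq [f(a)]^{h(t)}[f(b)]^{h(1-t)}$ and $g(ta+(1-t)b)\leq [g(a)]^{h(t)}[g(b)]^{h(1-t)}$, and multiply them to majorize $(fg)(ta+(1-t)b)$ by the product of the two right-hand sides.

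To obtain \eqref{d}, I would apply the Young-type inequality with $u=[f(a)]^{h(t)}[f(b)]^{h(1-t)}$ and $v=[g(a)]^{h(t)}[g(b)]^{h(1-t)}$, which reproduces exactly the integrand of \eqref{d}. To obtain \eqref{e}, I would instead regroup the same product as $[f(a)g(a)]^{h(t)}\,[f(b)g(b)]^{h(1-t)}$ and apply the Young-type inequality with $u=[f(a)g(a)]^{h(t)}$ and $v=[f(b)g(b)]^{h(1-t)}$, which yields the integrand of \eqref{e}. In both cases I would then integrate over $t\in[0,1]$ and substitute $x=ta+(1-t)b$, under which $dx=(a-b)\,dt$ and the limits $t=0,1$ correspond to $x=b,a$, so that $\int_{0}^{1}(fg)(ta+(1-t)b)\,dt=\frac{1}{b-a}\int_{a}^{b}(fg)(x)\,dx$. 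This turns each left-hand side into the desired averaged integral while leaving the right-hand sides, which depend on $t$ only through $h(t)$ and $h(1-t)$, as integrals over $t$.

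The argument is routine once the tool is fixed, and I do not anticipate a genuine obstacle. The only point requiring care is choosing the correct pairing of the four factors before invoking Young's inequality: the $f$-versus-$g$ split produces \eqref{d}, whereas the $a$-versus-$b$ split produces \eqref{e}. These two distinct regroupings of the same product bound are precisely what distinguishes the two conclusions, so I would state the multiplied inequality once and then branch into the two applications.
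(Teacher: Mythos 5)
Your proposal is correct and follows essentially the same route as the paper: the authors likewise multiply the two $h$-$\log$-convexity bounds at $ta+(1-t)b$ and invoke the same Young-type inequality $cd\leq \alpha c^{\frac{1}{\alpha}}+\beta d^{\frac{1}{\beta}}$, using the $f$-versus-$g$ grouping for \eqref{d} and the $a$-versus-$b$ regrouping $\left[f(a)g(a)\right]^{h(t)}\left[f(b)g(b)\right]^{h(1-t)}$ for \eqref{e}, before integrating in $t$. Your write-up is in fact slightly more careful than the paper's, since you justify the Young-type inequality via weighted AM--GM and make the substitution $x=ta+(1-t)b$ explicit.
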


\begin{proof}
Since $f$ and $g$ are $h$-$\log $-convex functions we have%
\begin{eqnarray}
f\left( ta+\left( 1-t\right) b\right) &\leq &\left[ f\left( a\right) \right]
^{h\left( t\right) }\left[ f\left( b\right) \right] ^{h\left( 1-t\right) }
\label{f} \\
g\left( ta+\left( 1-t\right) b\right) &\leq &\left[ g\left( a\right) \right]
^{h\left( t\right) }\left[ g\left( b\right) \right] ^{h\left( 1-t\right) }. 
\notag
\end{eqnarray}%
If we multiply both sides and use the fact that $cd\leq \alpha c^{\frac{1}{%
\alpha }}+\beta d^{\frac{1}{\beta }}$ (for $\alpha ,\beta >0,$ $\alpha
+\beta =1$) we get%
\begin{equation*}
\left( fg\right) \left( ta+\left( 1-t\right) b\right) \leq \alpha \left\{ 
\left[ f\left( a\right) \right] ^{h\left( t\right) }\left[ f\left( b\right) %
\right] ^{h\left( 1-t\right) }\right\} ^{\frac{1}{\alpha }}+\beta \left\{ %
\left[ g\left( a\right) \right] ^{h\left( t\right) }\left[ g\left( b\right) %
\right] ^{h\left( 1-t\right) }\right\} ^{\frac{1}{\beta }}.
\end{equation*}%
By integrating the above inequality, we get the proof of (\ref{d}).

On the other hand after multiplying both sides of (\ref{f}) we can write%
\begin{equation*}
\left( fg\right) \left( ta+\left( 1-t\right) b\right) \leq \alpha \left[
f\left( a\right) g\left( a\right) \right] ^{\frac{h\left( t\right) }{\alpha }%
}+\beta \left[ f\left( b\right) g\left( b\right) \right] ^{\frac{h\left(
1-t\right) }{\beta }}.
\end{equation*}%
Then, by integrating the last inequality we get the proof of \ref{e}.
\end{proof}

\begin{theorem}
Let $f$ be an $h$-$\log $-convex function on $\left[ a,b\right] .$ For $%
\alpha ,\beta >0,$ $\alpha +\beta =1$ we have%
\begin{equation*}
f\left( \frac{a+b}{2}\right) \leq \alpha \frac{1}{b-a}\int_{a}^{b}f\left(
x\right) ^{\frac{h\left( \frac{1}{2}\right) }{\alpha }}dx+\beta \frac{1}{b-a}%
\int_{a}^{b}f\left( x\right) ^{\frac{h\left( \frac{1}{2}\right) }{\beta }}dx.
\end{equation*}
\end{theorem}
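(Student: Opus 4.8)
The plan is to reproduce the midpoint structure that drives the earlier theorems, combining the defining inequality of Definition \ref{d1} taken at $t=\frac12$ with the Young-type inequality $cd\leq\alpha c^{1/\alpha}+\beta d^{1/\beta}$ (for $\alpha,\beta>0$, $\alpha+\beta=1$) already exploited in the proof of (\ref{d})--(\ref{e}). The starting observation is that the midpoint is the average of a point and its reflection: for every $t\in[0,1]$,
\begin{equation*}
\frac{a+b}{2}=\frac12\bigl(ta+(1-t)b\bigr)+\frac12\bigl(tb+(1-t)a\bigr).
\end{equation*}

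First I would apply $h$-$\log$-convexity with parameter $\frac12$ to the pair $u=ta+(1-t)b$ and $v=tb+(1-t)a$. Because the weight $\frac12$ appears on both $u$ and $v$, both exponents collapse to $h(\frac12)$, giving
\begin{equation*}
f\!\left(\frac{a+b}{2}\right)\leq\bigl[f(ta+(1-t)b)\bigr]^{h(1/2)}\bigl[f(tb+(1-t)a)\bigr]^{h(1/2)}.
\end{equation*}
Next I would bound the product on the right by the Young-type inequality with $c=[f(ta+(1-t)b)]^{h(1/2)}$ and $d=[f(tb+(1-t)a)]^{h(1/2)}$, which yields
\begin{equation*}
f\!\left(\frac{a+b}{2}\right)\leq\alpha\,[f(ta+(1-t)b)]^{\frac{h(1/2)}{\alpha}}+\beta\,[f(tb+(1-t)a)]^{\frac{h(1/2)}{\beta}}.
\end{equation*}

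To finish I would integrate in $t$ over $[0,1]$. The left-hand side is constant in $t$, so it is unaffected, while the substitution $x=ta+(1-t)b$ converts the first integral on the right into $\frac{1}{b-a}\int_a^b f(x)^{h(1/2)/\alpha}\,dx$; the reflected substitution $x=tb+(1-t)a$ delivers the analogous integral $\frac{1}{b-a}\int_a^b f(x)^{h(1/2)/\beta}\,dx$ for the second term. Collecting these gives exactly the claimed bound.

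The calculation is routine once the decomposition is in hand, so the only substantive step is recognizing that the definition must be invoked at $t=\frac12$ on the reflected pair $\{u,v\}$ rather than directly at the endpoints $a,b$; this is what forces the common exponent $h(\frac12)$ and lets both integrals collapse onto the single variable $x$. The one point I would flag is that the displayed right-hand side implicitly requires $f^{h(1/2)/\alpha}$ and $f^{h(1/2)/\beta}$ to be integrable, which is automatic for the positive continuous $f$ tacitly assumed throughout.
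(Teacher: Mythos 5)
Your proposal is correct and follows essentially the same route as the paper: the paper also takes $t=\frac{1}{2}$ in Definition \ref{d1}, substitutes the reflected pair $x=ta+(1-t)b$, $y=(1-t)a+tb$, applies $cd\leq \alpha c^{1/\alpha}+\beta d^{1/\beta}$, and integrates over $t\in[0,1]$ with the same two substitutions. Your remark on the integrability of $f^{h(1/2)/\alpha}$ and $f^{h(1/2)/\beta}$ is a minor point the paper leaves implicit, but otherwise the two arguments coincide step for step.
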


\begin{proof}
If we choose $t=\frac{1}{2}$ on Definition \ref{d1} we have%
\begin{equation*}
f\left( \frac{x+y}{2}\right) \leq \left[ f\left( x\right) f\left( y\right) %
\right] ^{h\left( \frac{1}{2}\right) }
\end{equation*}%
If we change the variable $x=ta+\left( 1-t\right) b$ and $y=\left(
1-t\right) a+tb$ we get%
\begin{equation*}
f\left( \frac{a+b}{2}\right) \leq f\left( ta+\left( 1-t\right) b\right)
^{h\left( \frac{1}{2}\right) }f\left( \left( 1-t\right) a+tb\right)
^{h\left( \frac{1}{2}\right) }
\end{equation*}%
If we use the inequality $cd\leq \alpha c^{\frac{1}{\alpha }}+\beta d^{\frac{%
1}{\beta }}$ (for $\alpha ,\beta >0,$ $\alpha +\beta =1$) we get%
\begin{equation*}
f\left( \frac{a+b}{2}\right) \leq \alpha f\left( ta+\left( 1-t\right)
b\right) ^{\frac{h\left( \frac{1}{2}\right) }{\alpha }}+\beta f\left( \left(
1-t\right) a+tb\right) ^{\frac{h\left( \frac{1}{2}\right) }{\beta }}.
\end{equation*}%
By integrating the last inequality over $t$ on $\left[ 0,1\right] $ we have%
\begin{equation*}
f\left( \frac{a+b}{2}\right) \leq \alpha \int_{0}^{1}f\left( ta+\left(
1-t\right) b\right) ^{\frac{h\left( \frac{1}{2}\right) }{\alpha }}dt+\beta
\int_{0}^{1}f\left( \left( 1-t\right) a+tb\right) ^{\frac{h\left( \frac{1}{2}%
\right) }{\beta }}dt.
\end{equation*}%
By rewriting the inequality by using suitable variable changings we get the
desired result.
\end{proof}

\section{\textbf{Results for }$h$-geometrically \textbf{Convex Functions}}

\begin{definition}
\label{d2}A positive function $f$ is called $h$-geometrically convex on a
real interval $I=\left[ a,b\right] ,$ if for all $x,y\in I$ and $t\in \left[
0,1\right] ,$%
\begin{equation}
f\left( x^{t}y^{\left( 1-t\right) }\right) \leq \left[ f\left( x\right) %
\right] ^{h\left( t\right) }\left[ f\left( y\right) \right] ^{h\left(
1-t\right) }  \label{d22}
\end{equation}%
where $h\left( t\right) $\ is a nonnegative function on $J,$ with $%
h:J\subseteq 
\mathbb{R}
\rightarrow 
\mathbb{R}
.$
\end{definition}

If $f$ is a positive $h$-geometrically concave function, then inequality is
reversed.

\begin{remark}
It is clear that when $h\left( t\right) =t$ in Definition \ref{d2}, $h$%
-geometrically convex (concave) become ordinary geometrically convex
(concave) function, and if we take $h\left( t\right) =t^{s}$ in Definition %
\ref{d2}, $h$-geometrically convex (concave) become $s$-geometrically convex
(concave) function.
\end{remark}

\begin{remark}
As we can write%
\begin{equation*}
x^{t}y^{\left( 1-t\right) }\leq tx+\left( 1-t\right) y
\end{equation*}%
for $t\in \left[ 0,1\right] $ and $x,y>0,$ we get all Theorems and
Corollaries given at Section 2\textbf{\ }for decreasing $h-$geometrically
convex functions.
\end{remark}

\begin{theorem}
Let $f$ be an $h$-geometrically convex function on $I.$ For every $x,y\in I$
with $x<y$ we get%
\begin{equation*}
\frac{1}{\ln y-\ln x}\int_{x}^{y}f\left( \gamma \right) f\left( \frac{xy}{%
\gamma }\right) \frac{d\gamma }{\gamma }\leq \int_{0}^{1}\left[ f\left(
x\right) f\left( y\right) \right] ^{h\left( t\right) +h\left( 1-t\right) }dt.
\end{equation*}
\end{theorem}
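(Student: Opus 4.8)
The plan is to mirror the proof of Theorem~\ref{10}, replacing the arithmetic convex combination with the geometric one. First I would exploit the defining inequality (\ref{d22}) of $h$-geometric convexity at the two symmetric points $x^{t}y^{1-t}$ and $x^{1-t}y^{t}$. For the first I write
\begin{equation*}
f\left( x^{t}y^{1-t}\right) \leq \left[ f\left( x\right) \right] ^{h\left( t\right) }\left[ f\left( y\right) \right] ^{h\left( 1-t\right) },
\end{equation*}
and for the second, obtained by swapping the roles of $x$ and $y$,
\begin{equation*}
f\left( x^{1-t}y^{t}\right) \leq \left[ f\left( y\right) \right] ^{h\left( t\right) }\left[ f\left( x\right) \right] ^{h\left( 1-t\right) }.
\end{equation*}
Multiplying these two inequalities gives
\begin{equation*}
f\left( x^{t}y^{1-t}\right) f\left( x^{1-t}y^{t}\right) \leq \left[ f\left( x\right) f\left( y\right) \right] ^{h\left( t\right) +h\left( 1-t\right) },
\end{equation*}
which already matches the integrand on the right-hand side of the claimed bound.

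Next I would integrate this inequality over $t\in\left[ 0,1\right]$. The right-hand side integrates verbatim into $\int_{0}^{1}\left[ f\left( x\right) f\left( y\right) \right] ^{h\left( t\right) +h\left( 1-t\right) }dt$, so all the work is on the left. The key is the substitution $\gamma = x^{t}y^{1-t}$, equivalently $\ln\gamma = t\ln x + (1-t)\ln y$, so that $t$ runs over $\left[ 0,1\right]$ exactly as $\gamma$ runs from $y$ down to $x$ (recall $x<y$). Differentiating, $d\gamma/\gamma = (\ln x - \ln y)\,dt$, hence $dt = \dfrac{d\gamma}{(\ln y - \ln x)\,\gamma}$ after flipping the limits. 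Under this substitution the partner point becomes $x^{1-t}y^{t} = xy/\gamma$, since $(x^{t}y^{1-t})(x^{1-t}y^{t}) = xy$. Therefore
\begin{equation*}
\int_{0}^{1} f\left( x^{t}y^{1-t}\right) f\left( x^{1-t}y^{t}\right) dt = \frac{1}{\ln y - \ln x}\int_{x}^{y} f\left( \gamma\right) f\left( \frac{xy}{\gamma}\right) \frac{d\gamma}{\gamma},
\end{equation*}
which is precisely the left-hand side of the asserted inequality.

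The step I expect to be the main obstacle is the bookkeeping of the substitution: getting the orientation of the limits and the sign of $d\gamma/\gamma$ right, and confirming that the weight $1/(\ln y - \ln x)$ emerges with the correct sign when $x<y$. Everything else is a direct transcription of the multiplicative argument from Theorem~\ref{10}. Note that, unlike that theorem, I do \emph{not} need $h$ to be superadditive or $f$ monotone here, since the statement keeps the exponent $h(t)+h(1-t)$ inside the integral rather than collapsing it via superadditivity to $h(1)$; the geometric-mean change of variables is the only analytic ingredient.
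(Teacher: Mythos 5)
Your proposal is correct and follows essentially the same route as the paper's proof: apply the defining inequality at the symmetric points $x^{t}y^{1-t}$ and $x^{1-t}y^{t}$, multiply, integrate over $t\in\left[0,1\right]$, and substitute $\gamma=x^{t}y^{1-t}$ with $x^{1-t}y^{t}=xy/\gamma$. Your treatment of the change of variables is in fact more careful than the paper's, which states the substitution without verifying the sign and orientation details you work out explicitly.
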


\begin{proof}
Since we choose $f$ is an $h$-geometrically convex function on $I$, we can
write%
\begin{eqnarray*}
f\left( x^{t}y^{1-t}\right) &\leq &\left[ f\left( x\right) \right] ^{h\left(
t\right) }\left[ f\left( y\right) \right] ^{h\left( 1-t\right) } \\
f\left( x^{1-t}y^{t}\right) &\leq &\left[ f\left( x\right) \right] ^{h\left(
1-t\right) }\left[ f\left( y\right) \right] ^{h\left( t\right) }.
\end{eqnarray*}%
If we multiply both sides of inequalities we get%
\begin{equation*}
f\left( x^{t}y^{1-t}\right) f\left( x^{1-t}y^{t}\right) \leq \left[ f\left(
x\right) f\left( y\right) \right] ^{h\left( t\right) +h\left( 1-t\right) }
\end{equation*}%
By integrating both sides respect to $t$ over $\left[ 0,1\right] $ we have%
\begin{equation*}
\int_{0}^{1}f\left( x^{t}y^{1-t}\right) f\left( x^{1-t}y^{t}\right) dt\leq
\int_{0}^{1}\left[ f\left( x\right) f\left( y\right) \right] ^{h\left(
t\right) +h\left( 1-t\right) }dt
\end{equation*}%
If we change the variable $\gamma =x^{t}y^{1-t},$ we get the desired result.
\end{proof}

\begin{theorem}
Let $f$ and $g$ are $h$-geometricially convex functions on $I.$ For $p>1,$ $%
\frac{1}{p}+\frac{1}{q}=1$ we get%
\begin{eqnarray*}
\int_{0}^{1}f\left( x^{t}y^{1-t}\right) g\left( x^{1-t}y^{t}\right) dt &\leq
&\left( \int_{0}^{1}f\left( x\right) ^{p^{2}h\left( t\right) }dt\right) ^{%
\frac{1}{p^{2}}}\left( \int_{0}^{1}g\left( y\right) ^{pqh\left( t\right)
}dt\right) ^{\frac{1}{pq}} \\
&&\times \left( \int_{0}^{1}f\left( y\right) ^{pqh\left( 1-t\right)
}dt\right) ^{\frac{1}{pq}}\left( \int_{0}^{1}g\left( x\right) ^{q^{2}h\left(
1-t\right) }dt\right) ^{\frac{1}{q^{2}}}
\end{eqnarray*}%
for every $x,y\in I$ with $x<y$ and $t\in \left[ 0,1\right] .$
\end{theorem}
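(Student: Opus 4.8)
The plan is to combine the defining inequalities of $h$-geometric convexity for $f$ and $g$, integrate their product over $\left[ 0,1\right]$, and then split the resulting four-factor integrand by means of the generalized H\"older inequality.

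First I would invoke Definition \ref{d2} for $f$ evaluated at $x^{t}y^{1-t}$, namely
\[
f\left( x^{t}y^{1-t}\right) \leq \left[ f\left( x\right) \right] ^{h\left( t\right) }\left[ f\left( y\right) \right] ^{h\left( 1-t\right) },
\]
and for $g$ evaluated at $x^{1-t}y^{t}$; replacing $t$ by $1-t$ in the definition gives
\[
g\left( x^{1-t}y^{t}\right) \leq \left[ g\left( x\right) \right] ^{h\left( 1-t\right) }\left[ g\left( y\right) \right] ^{h\left( t\right) }.
\]
Multiplying these two inequalities then yields
\[
f\left( x^{t}y^{1-t}\right) g\left( x^{1-t}y^{t}\right) \leq \left[ f\left( x\right) \right] ^{h\left( t\right) }\left[ g\left( y\right) \right] ^{h\left( t\right) }\left[ f\left( y\right) \right] ^{h\left( 1-t\right) }\left[ g\left( x\right) \right] ^{h\left( 1-t\right) }.
\]

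Next I would integrate over $t\in \left[ 0,1\right]$ and apply the generalized H\"older inequality to the four-factor right-hand side. The crucial point is the choice of H\"older exponents $p^{2},pq,pq,q^{2}$, which is admissible precisely because
\[
\frac{1}{p^{2}}+\frac{1}{pq}+\frac{1}{pq}+\frac{1}{q^{2}}=\left( \frac{1}{p}+\frac{1}{q}\right) ^{2}=1.
\]
Assigning the factor $\left[ f\left( x\right) \right] ^{h\left( t\right) }$ the exponent $p^{2}$, the factor $\left[ g\left( y\right) \right] ^{h\left( t\right) }$ the exponent $pq$, the factor $\left[ f\left( y\right) \right] ^{h\left( 1-t\right) }$ the exponent $pq$, and the factor $\left[ g\left( x\right) \right] ^{h\left( 1-t\right) }$ the exponent $q^{2}$ reproduces exactly the four integral factors appearing in the claimed bound.

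The only genuine obstacle is recognizing the correct splitting of $1$ into the four reciprocals $1/p^{2}+2/(pq)+1/q^{2}$; once this partition is identified, the conclusion follows by a direct application of H\"older. I would note that no monotonicity or symmetry hypothesis on $h$ is needed here, since each of the four factors is estimated in its own separate integral, and the exponents $h\left( t\right)$ and $h\left( 1-t\right)$ are simply carried along multiplicatively through the powers $p^{2}h\left( t\right)$, $pqh\left( t\right)$, $pqh\left( 1-t\right)$, and $q^{2}h\left( 1-t\right)$.
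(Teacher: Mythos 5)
Your proof is correct and takes essentially the same route as the paper: the paper derives the identical pointwise product bound and then applies the two-term H\"older inequality twice in succession (first with exponents $p,q$, then again inside each of the two resulting factors), which is exactly the inductive proof of the four-function H\"older inequality you invoke directly with exponents $p^{2},pq,pq,q^{2}$. Your exponent bookkeeping $\frac{1}{p^{2}}+\frac{2}{pq}+\frac{1}{q^{2}}=\left( \frac{1}{p}+\frac{1}{q}\right) ^{2}=1$ matches the paper's computation precisely, so the two arguments coincide up to how H\"older is packaged.
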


\begin{proof}
As we choose $f$ and $g$ are $h$-geometricially convex functions on $I$ we
can write%
\begin{eqnarray*}
f\left( x^{t}y^{1-t}\right) &\leq &\left[ f\left( x\right) \right] ^{h\left(
t\right) }\left[ f\left( y\right) \right] ^{h\left( 1-t\right) } \\
g\left( x^{1-t}y^{t}\right) &\leq &\left[ g\left( x\right) \right] ^{h\left(
1-t\right) }\left[ g\left( y\right) \right] ^{h\left( t\right) }.
\end{eqnarray*}%
By multiplying both sides and integrate respect to $t$ over $\left[ 0,1%
\right] $ we have%
\begin{equation*}
\int_{0}^{1}f\left( x^{t}y^{1-t}\right) g\left( x^{1-t}y^{t}\right) dt\leq
\int_{0}^{1}\left[ f\left( x\right) g\left( y\right) \right] ^{h\left(
t\right) }\left[ f\left( y\right) g\left( x\right) \right] ^{h\left(
1-t\right) }dt.
\end{equation*}%
If we apply H\"{o}lder's inequality for $p>1,$ $\frac{1}{p}+\frac{1}{q}=1$
we get%
\begin{equation*}
\int_{0}^{1}f\left( x^{t}y^{1-t}\right) g\left( x^{1-t}y^{t}\right) dt\leq
\left( \int_{0}^{1}\left[ f\left( x\right) g\left( y\right) \right]
^{ph\left( t\right) }dt\right) ^{\frac{1}{p}}\left( \int_{0}^{1}\left[
f\left( y\right) g\left( x\right) \right] ^{qh\left( 1-t\right) }dt\right) ^{%
\frac{1}{q}}.
\end{equation*}%
Then by applying H\"{o}lder's inequality again, we get%
\begin{eqnarray*}
\int_{0}^{1}f\left( x^{t}y^{1-t}\right) g\left( x^{1-t}y^{t}\right) dt &\leq
&\left[ \left( \int_{0}^{1}f\left( x\right) ^{p^{2}h\left( t\right)
}dt\right) ^{\frac{1}{p}}\left( \int_{0}^{1}g\left( y\right) ^{pqh\left(
t\right) }dt\right) ^{\frac{1}{q}}\right] ^{\frac{1}{p}} \\
&&\times \left[ \left( \int_{0}^{1}f\left( y\right) ^{pqh\left( 1-t\right)
}dt\right) ^{\frac{1}{p}}\left( \int_{0}^{1}g\left( x\right) ^{q^{2}h\left(
1-t\right) }dt\right) ^{\frac{1}{q}}\right] ^{\frac{1}{q}}.
\end{eqnarray*}%
By rearranging the inequality, we get the desired rusult.
\end{proof}

\section{\protect\bigskip $h$-multi Convex Functions}

Finally, we can introduce the following definition.

\begin{definition}
\label{dd}A positive function $f$ is called $h$-multi convex on a real
interval $I=\left[ a,b\right] ,$ if for all $x,y\in I$ and $t,\lambda \in %
\left[ 0,1\right] ,$%
\begin{equation}
\lambda f\left( x^{t}y^{\left( 1-t\right) }\right) +\left( 1-\lambda \right)
f\left( tx+\left( 1-t\right) y\right) \leq \left[ f\left( x\right) \right]
^{h\left( t\right) }\left[ f\left( y\right) \right] ^{h\left( 1-t\right) }
\end{equation}%
where $h\left( t\right) $\ is a nonnegative function on $J,$ with $%
h:J\subseteq 
\mathbb{R}
\rightarrow 
\mathbb{R}
.$
\end{definition}

If $f$ is a positive $h$-multi concave function, then inequality is reversed.

\begin{remark}
It is clear that when $\lambda =0$ in Definition \ref{dd}, $h$-multi convex
(concave) become $h$-logarithmically convex (concave) function, and if we
take $\lambda =1$ in Definition \ref{dd}, $h$-multi convex (concave) become $%
s$-geometrically convex (concave) function.\bigskip 
\end{remark}

\begin{theorem}
Let $f$ be an $h-$multi convex function on $I.$ Then we get%
\begin{equation*}
\frac{1}{2}\left[ \frac{1}{\ln y-\ln x}\int_{x}^{y}\frac{f\left( \gamma
\right) }{\gamma }d\gamma +\frac{1}{y-x}\int_{x}^{y}f\left( \gamma \right)
d\gamma \right] \leq \int_{0}^{1}\left[ f\left( x\right) \right] ^{h\left(
t\right) }\left[ f\left( y\right) \right] ^{h\left( 1-t\right) }dt
\end{equation*}%
for all $x,y\in I$ and $t,\lambda \in \left[ 0,1\right] .$
\end{theorem}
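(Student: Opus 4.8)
The plan is to specialize the defining inequality of Definition \ref{dd} at the symmetric weight $\lambda = \frac{1}{2}$ and then integrate in $t$ over $[0,1]$. Setting $\lambda = \frac{1}{2}$ gives, for all $x,y \in I$ with $x<y$ and all $t \in [0,1]$,
\begin{equation*}
\frac{1}{2} f\left(x^{t}y^{1-t}\right) + \frac{1}{2} f\left(tx+(1-t)y\right) \leq \left[f(x)\right]^{h(t)}\left[f(y)\right]^{h(1-t)}.
\end{equation*}
Integrating both sides with respect to $t$ on $[0,1]$ and using linearity, the right-hand side immediately becomes the target integral $\int_{0}^{1}[f(x)]^{h(t)}[f(y)]^{h(1-t)}\,dt$, so it remains only to identify the two integrals on the left with the two averaged integrals in the statement.

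For the first term I would substitute $\gamma = x^{t}y^{1-t}$. Taking logarithms, $\ln\gamma = t\ln x + (1-t)\ln y$, so that $\frac{d\gamma}{\gamma} = (\ln x - \ln y)\,dt$; as $t$ runs from $0$ to $1$ the variable $\gamma$ runs from $y$ down to $x$. After reversing the limits to restore the orientation $x \to y$, this yields
\begin{equation*}
\int_{0}^{1} f\left(x^{t}y^{1-t}\right)\,dt = \frac{1}{\ln y - \ln x}\int_{x}^{y} \frac{f(\gamma)}{\gamma}\,d\gamma.
\end{equation*}
For the second term I would use the affine substitution $\gamma = tx+(1-t)y$, so that $d\gamma = (x-y)\,dt$ and again $\gamma$ travels from $y$ to $x$; the same reversal of limits gives
\begin{equation*}
\int_{0}^{1} f\left(tx+(1-t)y\right)\,dt = \frac{1}{y-x}\int_{x}^{y} f(\gamma)\,d\gamma.
\end{equation*}
Substituting these two identities into the integrated inequality and keeping the common factor $\frac{1}{2}$ produces exactly the claimed bound.

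The only point requiring genuine care is the bookkeeping of the integration limits together with the accompanying sign: because $x < y$, both substitutions send $t=0$ to $\gamma=y$ and $t=1$ to $\gamma=x$, so each raw integral emerges with reversed orientation, and one must flip the limits (picking up a factor $-1$) in order to match the prefactors $\frac{1}{\ln y - \ln x}$ and $\frac{1}{y-x}$ with the correct positive sign. Beyond this orientation check the argument is purely mechanical; notice in particular that no monotonicity hypothesis and no assumption on $h$ beyond nonnegativity is needed, since the symmetric choice $\lambda = \frac{1}{2}$ already splits the defining inequality into precisely the geometric-mean average and the arithmetic-mean average that appear on the left-hand side.
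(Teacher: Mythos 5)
Your proposal is correct and takes essentially the same route as the paper: both reduce to the pointwise inequality $\frac{1}{2}f\left(x^{t}y^{1-t}\right)+\frac{1}{2}f\left(tx+\left(1-t\right)y\right)\leq\left[f\left(x\right)\right]^{h\left(t\right)}\left[f\left(y\right)\right]^{h\left(1-t\right)}$ and then integrate in $t$ using the substitutions $\gamma=x^{t}y^{1-t}$ and $\gamma=tx+\left(1-t\right)y$. The only cosmetic difference is that the paper derives this intermediate inequality by integrating the defining inequality in $\lambda$ over $\left[0,1\right]$ (using $\int_{0}^{1}\lambda\,d\lambda=\int_{0}^{1}\left(1-\lambda\right)d\lambda=\frac{1}{2}$) rather than specializing to $\lambda=\frac{1}{2}$ as you do, and it leaves implicit the change-of-variable and orientation bookkeeping that you carry out explicitly.
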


\begin{proof}
From Definiton \ref{dd} we have%
\begin{equation*}
\lambda f\left( x^{t}y^{\left( 1-t\right) }\right) +\left( 1-\lambda \right)
f\left( tx+\left( 1-t\right) y\right) \leq \left[ f\left( x\right) \right]
^{h\left( t\right) }\left[ f\left( y\right) \right] ^{h\left( 1-t\right) }.
\end{equation*}%
If we integrate the inequality respect to $\lambda $ over $\left[ 0,1\right] 
$ we have%
\begin{equation*}
\int_{0}^{1}\left( \lambda f\left( x^{t}y^{\left( 1-t\right) }\right)
+\left( 1-\lambda \right) f\left( tx+\left( 1-t\right) y\right) \right)
d\lambda \leq \int_{0}^{1}\left[ f\left( x\right) \right] ^{h\left( t\right)
}\left[ f\left( y\right) \right] ^{h\left( 1-t\right) }d\lambda .
\end{equation*}%
So we have%
\begin{equation*}
\frac{f\left( x^{t}y^{\left( 1-t\right) }\right) +f\left( tx+\left(
1-t\right) y\right) }{2}\leq \left[ f\left( x\right) \right] ^{h\left(
t\right) }\left[ f\left( y\right) \right] ^{h\left( 1-t\right) }.
\end{equation*}%
Then by integrating the inequality respect to $t$ over $\left[ 0,1\right] $
we get the desired result.
\end{proof}

\bigskip


\begin{thebibliography}{99}
\bibitem{hdm} J. Hadamard, \textit{\'{E}tude sur les propri\'{e}t\'{e}s des
fonctions enti\`{e}res et en particulier d'une fonction consider\'{e}e par
Riemann}, J. Math Pures Appl., 58, (1893) 171--215.

\bibitem{zh} T.-Y. Zhang, A.-P. Ji and F. Qi, \textit{On Integral \i
nequalities of Hermite-Hadamard Type for s-Geometrically Convex Functions},
Abstract and Applied Analysis, doi:10.1155/2012/560586.

\bibitem{at} M. Tun\c{c} and A. O. Akdemir: On some integral inequalities
for $s$-logarithmically convex functions, submitted.

\bibitem{alzer} H. Alzer, \textit{A superadditive property of Hadamard's
gamma function}, Abh. Math. Semin. Univ. Hambg., 79 (2009), 11-23.

\bibitem{bre1} W. W. Breckner, \textit{Stetigkeitsaussagen f%
\"{}%
ur eine Klasse verallgemeinerter\ konvexer funktionen in topologischen
linearen Raumen}, Pupl. Inst. Math., 23, 13--20, 1978.

\bibitem{bre2} W. W. Breckner, \textit{Continuity of generalized convex and
generalized concave\ set-valued functions}, Rev Anal. Num%
\'{}%
er. Thkor. Approx., 22, 39--51, 1993.

\bibitem{dr1} S. S. Dragomir, J. Pe\v{c}ari\'{c} and L.E. Persson, \textit{%
Some inequalities of Hadamard type}, Soochow J.Math., 21, 335--241, 1995.

\bibitem{god} E. K. Godunova and V.I. Levin, \textit{Neravenstva dlja
funkcii sirokogo klassa, soderzascego vypuklye, monotonnye i nekotorye
drugie vidy funkii}, Vycislitel. Mat. i. Fiz. Mezvuzov. Sb. Nauc. Trudov,
MGPI,Moskva, pp. 138--142, 1985.

\bibitem{hud} H. Hudzik and L. Maligranda, \textit{Some remarks on }$s$-%
\textit{convex functions},\ Aequationes Math., 48, 100--111, 1994.

\bibitem{mit2} D. S. Mitrinovi\'{c}, J. Pe\v{c}ari\'{c}, and A.M. Fink,\ 
\textit{Classical and new inequalities in analysis}, KluwerAcademic,
Dordrecht, 1993.

\bibitem{skala} H. J. Skala, \textit{On the characterization of certain
similarly ordered super-additive functionals,} Proceedings of the American
Mathematical Society, 126 (5) (1998), 1349-1353.

\bibitem{var} S. Varo\v{s}anec, \textit{On }$h$-\textit{convexity}, J. Math.
Anal. Appl., Volume 326, Issue 1, 303--311, 2007.

\bibitem{bom} M. Bombardelli and S. Varo\v{s}anec, \textit{Properties of }$h$%
\textit{-convex functions related to the Hermite--Hadamard--Fej\'{e}r
inequalities}, Comput. Math. Appl. 58 (2009) 1869--1877.

\bibitem{sari} M.Z. Sar\i kaya, A. Sa\u{g}lam and H. Y\i ld\i r\i m, \textit{%
On some Hadamard-type inequalities for }$h$\textit{-convex functions,} J.
Math. Inequal. 2 (3) (2008) 335--341.

\bibitem{sari2} M.Z. Sar\i kaya, E. Set and M.E. \"{O}zdemir, \textit{On
some new inequalities of Hadamard-type involving }$h$\textit{-convex
functions}, Acta Math. Univ. Comenian LXXIX (2) (2010) 265--272.

\bibitem{oz} M.E. \"{O}zdemir, M. G\"{u}rb\"{u}z and A.O. Akdemir, \textit{%
Inequalities for }$h$-\textit{Convex Functions via Further Properties},
RGMIA Research Report Collection Volume 14, article 22, 2011.

\bibitem{Bes08a} M. Bessenyei, \textit{The Hermite--Hadamard inequality on
simplices}, Amer. Math. Monthly 115 (2008), no. 4, 339--345. MR 2009b:52023

\bibitem{BH10} P. Burai and A. H\'{a}zy, \textit{On Orlicz-convex functions}%
, Proc. 12th Symp. Math. Appl. (November 5-7, 2009) (University of
Timioara), Editura Politechnica, 2010, pp. 73--79.

\bibitem{BHJ09} P. Burai, A. H\'{a}zy, and T. Juh\'{a}sz, \textit{%
Bernstein--Doetsch type results for s-convex functions}, Publ. Math.
Debrecen 75 (2009), no. 1-2, 23--31.

\bibitem{BP02} M. Bessenyei and Zs. P\'{a}les, \textit{Higher-order
generalizations of Hadamard's inequality}, Publ. Math. Debrecen 61 (2002),
no. 3-4, 623--643. MR 2003k:26021

\bibitem{H11} A. H\'{a}zy, \textit{Bernstein--Doetsch type results for }$h$%
\textit{-convex functions}, Math. Inequal. Appl. 14 (2011), no. 3, 499--508.

\bibitem{pac} B. G. Pachpatte, \textit{Mathematical Inequalities},
North-Holland Mathematical Library, Elsevier Science B.V. Amsterdam, 2005.
\end{thebibliography}
\end{document}